\newtheorem{theorem}{Theorem}[section]
\newtheorem{lemma}[theorem]{Lemma}
\newtheorem{corollary}[theorem]{Corollary}
\newtheorem{conjecture}[theorem]{Conjecture}
\begin{document}

\title{Modified Stern-Brocot Sequences}

\author{Dhroova Aiylam \\ Mass Academy of Math and Science}

\maketitle

\begin{abstract}

	We discuss and prove several of the properties of the Stern-Brocot tree, including in particular the cross-determinant, before proposing a variant to the tree. In this variant, we allow for arbitrary choice of starting terms. We prove that regardless of the two starting terms every rational number between them appears in the tree.
	
\end{abstract}

\section{Introduction}

The Stern–-Brocot tree was discovered independently by Moritz Stern \cite{Stern} in 1858 and Achille Brocot $\cite{Brocot}$ in 1861. It was originally used by Brocot to design gear systems with a gear ratio close to some desired inexact value (like the number of days in a year) by finding a ratio of smooth numbers (numbers that decompose into small prime factors) near that value. Since smooth numbers factor into small primes, several small gears could be connected in sequence to generate an effective ratio of the product of their teeth; This would make a gear train of reasonable size possible, but would minimize its error \cite{Clocks}.

The tree begins with the terms $\frac{0}{1}$ and $\frac{1}{1}$. In each subsequent row, all terms are copied and between every pair of neighboring terms $\frac{a}{b}$ and $\frac{c}{d}$ the mediant fraction $\frac{a + c}{b + d}$ is put in lowest terms and inserted. This process is repeated ad infinitum; the result is the Stern-Brocot tree.

What Brocot had inadvertently done was develop a computationally easy way to find the best rational approximation to a fraction with a smaller denominator. It was quite well known that continued fractions could be used for the same purpose \cite{CFrac}, which sparked an interest in the connection between the two. Indeed, it was later discovered that the mediant could also be expressed as an operation on the continued fraction expansion of two fractions, whose continued fractions were already very close by virtue of their proximity. In fact, continued fractions provided a way to determine with some certainty exactly where a particular fraction would appear in the tree \cite{CTN}. Retracing the tree upward would then give a series of progressively worse rational approximation with decreasing denominator.

There are many other topics, albeit less well-known, that are related to the Stern-Brocot tree \cite{SB}. Farey Sequences, ordered lists of the rationals between $0$ and $1$ with denominator smaller than $n$, can be obtained by discarding fractions with denominator more than $n$ from the corresponding row of the Stern-Brocot tree \cite{CTN}. The Calkin-Wilf tree is another binary tree generated from a mediant-like procedure. Finally, 
the radii of Ford circles vary inversely with the square of the corresponding term in the left half of the Stern-Brocot Tree \cite{CTN}. Below is a visual representation:

\begin{figure}
	\centering
		\includegraphics[scale=0.80]{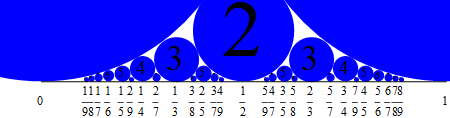}
	\label{fig:Ford}	
\end{figure}

In this paper, we begin by discussing the Stern-Brocot tree and proving several of its properties. We mention the symmetry of the tree, certain algebraic relations its elements satisfy, and whether its terms reduce. We then introduce the notion of the cross-determinant and analyze its role in the reduction of fraction, en route to a proof of the startling fact that every rational number between $0$ and $1$ appears in the Stern-Brocot tree.

We next present a variant to the original Stern-Brocot tree. We consider starting with terms other than $\frac{0}{1}$ and $\frac{1}{1}$, and ask ourselves which properties of the original tree extend to this one.  In particular, we prove that once again every rational number between the two starting terms appears in the tree. We do this first for special types of cross-determinant in Theorems 3.1, 3.2, and 3.3. In Theorem 3.7, we establish the result in general. As part of this proof, we develop the important idea of tree equivalence.

\section{Notation and Definitions}

In number theory, the Stern-Brocot tree is an infinite complete binary tree in which the vertices correspond precisely to the positive rational numbers. We define the Stern-Brocot tree in terms of Stern-Brocot sequences. The $0^{th}$ row of the tree, also the $0^{th}$ Stern-Brocot sequence, is $\frac{0}{1}, \frac{1}{1}, \frac{1}{0}$, which we denote by $SB_0$. In general, $SB_i$ is the $i^{th}$ Stern-Brocot sequence. Each successive sequence is formed by copying all terms from the previous sequence, inserting between every pair of consecutive fractions in the previous sequence their mediant, and reducing any fractions not already in simplest terms.

The $\emph{mediant}$ of two reduced fractions $\frac{a}{b}, \frac{c}{d}$ is the reduced fraction $\frac{a + c}{b + d}$.

The first few sequences (also the first few rows of the tree) are $$SB_0 = \frac{0}{1}, \; \; \; \frac{1}{1}, \; \; \; \frac{1}{0}$$  $$SB_1 = \frac{0}{1}, \; \; \;{\bf \frac{1}{2}}, \; \; \; \frac{1}{1}, \; \; \; {\bf \frac{2}{1}}, \; \; \; \frac{1}{0}$$  $$SB_2 = \frac{0}{1}, \; \; \; {\bf \frac{1}{3}}, \; \; \; \frac{1}{2}, \; \; \; {\bf \frac{2}{3}}, \; \; \; \frac{1}{1}, \; \; \; {\bf \frac{3}{2}}, \; \; \; \frac{2}{1}, \; \; \; {\bf \frac{3}{1}}, \; \; \; \frac{1}{0}.$$

In bold are the mediant fractions that have been inserted.

It is quite clear that these sequences are reciprocally symmetric with respect to their center, $\frac{1}{1}$; that is, the $j^{th}$ 
term counted from the left is the reciprocal of the $j^{th}$ term counted from the right. In light of this, we will consider only the left half of these sequences, between $0$ and $1$ inclusive, which we will call Stern-Brocot half-sequences.

We begin with a definition and some small lemmas.

Let the $\emph{cross-determinant}$ of two consecutive reduced fractions $\frac{a}{b}, \frac{c}{d}$ in a Stern-Brocot half-sequence equal $bc - ad$. The following lemma is well-known $\cite{CTN}$; we provide a proof because it illustrates a method used later.

\begin{lemma}
For any two consecutive fractions $a/b$ and $c/d$ in a sequence, the cross-determinant of the pair equals 1.
\end{lemma}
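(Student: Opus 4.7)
The plan is to prove the lemma by induction on the row index $i$ of the Stern-Brocot half-sequence $SB_i$.

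For the base case $SB_0 = \tfrac{0}{1}, \tfrac{1}{1}$ (restricted to the left half), the two consecutive fractions $\tfrac{0}{1}$ and $\tfrac{1}{1}$ have cross-determinant $1 \cdot 1 - 0 \cdot 1 = 1$, so the statement holds.

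For the inductive step, I would assume the cross-determinant of every consecutive pair in $SB_i$ equals $1$, and show the same for $SB_{i+1}$. Every consecutive pair in $SB_{i+1}$ arises by inserting a mediant $\tfrac{a+c}{b+d}$ between some pair $\tfrac{a}{b}, \tfrac{c}{d}$ of consecutive fractions in $SB_i$. So I need to check the cross-determinants of the two new pairs $\bigl(\tfrac{a}{b}, \tfrac{a+c}{b+d}\bigr)$ and $\bigl(\tfrac{a+c}{b+d}, \tfrac{c}{d}\bigr)$. A direct computation gives
\[
b(a+c) - a(b+d) = bc - ad = 1, \qquad (b+d)c - (a+c)d = bc - ad = 1,
\]
using the inductive hypothesis $bc - ad = 1$ in each case.

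The subtle point, and the only part that requires a bit of care, is that the lemma is stated for \emph{reduced} fractions, and the definition of $SB_{i+1}$ includes a reduction step for any inserted mediant that is not already in lowest terms. So I must verify that the mediant $\tfrac{a+c}{b+d}$ is already reduced, otherwise the cross-determinants computed above would not match those of the actual consecutive pairs in $SB_{i+1}$. But this falls out of the computation itself: any common divisor of $a+c$ and $b+d$ must divide $b(a+c) - a(b+d) = 1$, so $\gcd(a+c, b+d) = 1$ automatically. Thus no reduction ever occurs, the mediant stands as written, and the induction closes. This observation is the ``method used later'' referenced in the text and is really the only conceptual content of the argument.
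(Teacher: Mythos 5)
Your proof is correct and follows essentially the same route as the paper's: induction on the row index, the determinant computation $b(a+c)-a(b+d)=bc-ad=1$ for both new pairs, and the key observation that any common divisor of $a+c$ and $b+d$ divides this determinant, so the mediant is already reduced. Nothing is missing.
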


\begin{proof}
We prove this by induction. For the zeroeth half-sequence $\frac{0}{1}, \frac{1}{1}$ the lemma holds. Suppose that for any two consecutive elements in the $i^{th}$ half-sequence and for all $i \le k$ the lemma holds. Let $a/b$ and $c/d$ be two consecutive fractions in the $k^{th}$ half-sequence. Their mediant is equal to $\frac{a + c}{b + d}$ which can be written as $$\frac{\frac{a + c}{g}}{\frac{b + d}{g}}$$ in reduced form, where $g = \gcd({a + c, b + d})$. Then the determinant $bc - ad$ can be written as $(a + c)b - (b + d)a$, which is divisible by $g$. But since $bc - ad = 1$ by our inductive hypothesis, $g = 1$. Thus $\frac{a + c}{b + d}$ is reduced.

Then any two consecutive entries in row $k + 1$ are either $\frac{a}{b}, \frac{a + c}{b + d}$ or $\frac{a + c}{b + d}, \frac{c}{d}$ for $\frac{a}{b}$ and $\frac{c}{d}$ consecutive in the previous sequence. The determinant for the first pair is $$b(a + c) - a(b + d) = bc - ad = 1,$$ and for the second pair it is $$(b + d)c - (a + c)d = bc - ad = 1,$$ Hence the determinant of any two consecutively occuring fractions is $1$. 
\end{proof}

As a part of this proof, we have established the following corollary:

\begin{corollary}
The mediant fractions in every row never need to be reduced.
\end{corollary}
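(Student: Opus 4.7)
The plan is to extract the reduction-is-automatic argument directly from the proof of Lemma 1.1, which already contains everything needed. By that lemma, any two consecutive fractions $\frac{a}{b}, \frac{c}{d}$ in a half-sequence satisfy $bc - ad = 1$, and their mediant is $\frac{a+c}{b+d}$. I would state that to verify the corollary it suffices to show $\gcd(a+c, b+d) = 1$ whenever $\frac{a}{b}$ and $\frac{c}{d}$ are consecutive in some half-sequence.

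The key observation I would invoke is that $\gcd(a+c, b+d)$ divides any integer linear combination of $a+c$ and $b+d$. Taking the combination $(a+c)b - (b+d)a = bc - ad$ yields a quantity equal to $1$ by Lemma 1.1. Hence $\gcd(a+c, b+d)$ divides $1$, forcing it to equal $1$, so $\frac{a+c}{b+d}$ is already in lowest terms and no reduction step is ever triggered.

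Since Lemma 1.1 was proven by induction on the row index, this argument applies uniformly to every mediant inserted at every stage, which gives the corollary in full generality. There is no real obstacle here; the only subtlety worth flagging is the logical order, namely that Lemma 1.1 was itself proven by first checking that the mediant was reduced (so that the induction hypothesis on cross-determinants could be propagated). In the write-up I would therefore emphasize that the corollary is essentially a restatement of the intermediate step already carried out inside the lemma's induction, repackaged as an independent statement because it will be used on its own later in the paper.
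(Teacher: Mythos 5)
Your argument is correct and is essentially the paper's own: the paper derives this corollary as the intermediate step inside the induction proving the cross-determinant lemma, where $g=\gcd(a+c,b+d)$ divides $(a+c)b-(b+d)a=bc-ad=1$. Your remark about the logical order (that the reducedness is really established within the lemma's induction, and the corollary repackages that step) matches how the paper presents it.
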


The next two lemmas are quite simple $\cite{SB}$; their proofs are left as an exercise for the reader.

\begin{lemma}
There are exactly $2^i + 1$ elements in the $i^{th}$ Stern-Brocot half-sequence.
\end{lemma}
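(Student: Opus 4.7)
The plan is to prove this by straightforward induction on $i$, since the construction of each half-sequence from the previous one is the insertion of one mediant in each gap between consecutive terms. No arithmetic about the fractions themselves is needed; the only fact in play is how many new terms get inserted.

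For the base case, I would observe that the $0^{th}$ half-sequence is $\frac{0}{1}, \frac{1}{1}$, which has $2 = 2^0 + 1$ elements.

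For the inductive step, I would assume the $k^{th}$ half-sequence has exactly $2^k + 1$ elements. The $(k+1)^{th}$ half-sequence is formed by copying all $2^k + 1$ terms and inserting a single new mediant between each pair of consecutive terms. Since there are $(2^k + 1) - 1 = 2^k$ such gaps, exactly $2^k$ new elements are inserted, giving a total of $(2^k + 1) + 2^k = 2^{k+1} + 1$ elements, as claimed.

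There is no real obstacle here: the content is entirely combinatorial bookkeeping about how many gaps exist in a list of a given length. The only subtlety worth flagging is to make sure that inserted mediants are genuinely new terms (i.e., do not coincide with existing terms), but this is immediate from the fact that the mediant of $\frac{a}{b}$ and $\frac{c}{d}$ lies strictly between them when $\frac{a}{b} < \frac{c}{d}$, so no collapse of the count can occur.
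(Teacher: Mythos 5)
Your induction is correct and complete: the base case, the count of $2^k$ gaps, and the remark that mediants lie strictly between their parents (so no term coincides with an existing one) cover everything needed. The paper itself leaves this lemma as an exercise for the reader, and your argument is exactly the standard counting induction one would expect to fill that gap.
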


\begin{lemma}
Stern-Brocot half-sequences are algebraically symmetric; that is, opposite entries add to $1$.
\end{lemma}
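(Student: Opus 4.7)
My approach is induction on the row index $k$. The base case $SB_0 = \frac{0}{1}, \frac{1}{1}$ satisfies $\frac{0}{1} + \frac{1}{1} = 1$ trivially, so the inductive hypothesis is that every pair of entries at symmetric positions of $SB_k$ sums to $1$.

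The key observation I would pull out of the hypothesis is this: if a reduced fraction $\frac{a}{b}$ occupies position $i$ in $SB_k$, then the entry at the opposite position $2^k - i$ must be $\frac{b-a}{b}$ in lowest terms. The hypothesis forces it to equal $1 - \frac{a}{b} = \frac{b-a}{b}$, and this representation is already reduced because $\gcd(b-a, b) = \gcd(a, b) = 1$. This upgrades the purely additive symmetry into an explicit symmetry of numerators and denominators, which is what makes the mediant step clean.

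With that in hand, the passage from $SB_k$ to $SB_{k+1}$ is routine. The old entries remain at the even-indexed positions of $SB_{k+1}$ and stay opposite to each other by the hypothesis. Each new mediant is inserted between a consecutive pair $\frac{a}{b}, \frac{c}{d}$ of $SB_k$; by the observation above, the symmetric consecutive pair in the opposite half of $SB_k$ must be $\frac{d-c}{d}, \frac{b-a}{b}$, and the two corresponding mediants occupy opposite positions in $SB_{k+1}$. By Corollary 2.2 these mediants need no reduction, so I compute directly
\[
\frac{a+c}{b+d} + \frac{(d-c)+(b-a)}{d+b} = \frac{b+d}{b+d} = 1,
\]
which closes the induction.

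I do not anticipate a serious obstacle here. The only mild subtlety is confirming that the complement of a reduced fraction is automatically in lowest terms, and this is resolved by a single $\gcd$ identity; after that the argument collapses to one line of algebra on the inserted mediants.
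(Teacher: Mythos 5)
Your induction is correct, and it fills a gap the paper deliberately leaves open: Lemma 2.4 is stated with its proof ``left as an exercise for the reader,'' so there is no argument of record to compare against. Your key move --- upgrading the additive hypothesis to the explicit statement that the entry opposite a reduced $\frac{a}{b}$ is $\frac{b-a}{b}$, itself reduced since $\gcd(b-a,b)=\gcd(a,b)=1$ --- is exactly what makes the mediant computation close in one line, and your index bookkeeping (old entries land at even positions, the mediant over positions $i,i+1$ lands opposite the mediant over positions $2^k-i-1,2^k-i$) is sound. One small remark: the appeal to Corollary 2.2 is not actually needed for this lemma, since the claim concerns the \emph{values} of opposite entries and these are unchanged by any reduction; it is only needed if you want to track numerators and denominators literally, as your strengthened hypothesis does, so keeping it is harmless but worth flagging as serving that bookkeeping rather than the sum itself.
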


Let us denote by $SB_i[j]$ the $j$-th element in the $i$-th half-sequence. Also, let us denote by $N_i$ and $D_i$, the ordered set of numerators and denominators of $SB_i$. For example, $$N_2 = \{0, 1, 1, 2, 1\},$$ $$D_2 = \{1, 3, 2, 3, 1\}.$$ Notice that these sets have the same size --- $2^i + 1$ in general. We will therefore take the sum $A + B$ of two ordered sets of equal size to mean the ordered set of the same size where each element is the sum of the corresponding elements in $A$ and $B$.

Let $F$ be the function that takes any set of fractions to the set produced by copying each fraction and inserting between consecutive ones their mediant. Specifically, $F(SB_i) = SB_{i + 1}$. Next let $G$ be the function that takes a set of whole numbers to the set formed by inserting between each pair of numbers their sum. For example, $$\{1, 2, 3, 4\} \rightarrow \{1, 3, 2, 5, 3, 7, 4\}$$ under application of $G$. Finally, we take $X[a, b]$ to denote the $a^{th}$ through $b^{th}$, inclusive, elements of set $X$.

The following lemma allows us to recursively describe the numerators and denominators of successive sequences.

\begin{lemma}
$N_{i + 1}[0, 2^i] = N_i$ and $D_{i + 1}[0, 2^i] = N_i + D_i.$
\end{lemma}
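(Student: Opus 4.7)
The plan is to prove both identities simultaneously by induction on $i$, exploiting the observation that $SB_{i+1}$ is self-similar on its left half: the first $2^i+1$ entries of $SB_{i+1}$ are precisely what you obtain by taking the first $2^{i-1}+1$ entries of $SB_i$ and applying one round of mediant insertion. The base case $i=0$ is an immediate check against the explicit values of $SB_0$ and $SB_1$.

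For the inductive step, I will first justify the self-similarity claim by counting. The initial segment $SB_i[0, 2^{i-1}]$ contains $2^{i-1}+1$ fractions, so inserting a mediant between each consecutive pair yields $2^i + 1$ elements. Because mediants are a local operation that never reach across the boundary at index $2^{i-1}$, these $2^i+1$ new elements are exactly $SB_{i+1}[0, 2^i]$. By the inductive hypothesis applied at level $i$, the starting fractions $SB_i[0, 2^{i-1}]$ have numerators $N_{i-1}$ and denominators $N_{i-1}+D_{i-1}$.

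I will then push the mediant insertion through to the numerator and denominator sets separately. By Corollary 2.2 no reduction is ever required, so applying a round of mediant insertion to a sequence of fractions amounts to applying $G$ coordinate-wise to its numerators and denominators. Hence the numerators of $SB_{i+1}[0, 2^i]$ are $G(N_{i-1})$, which equals $N_i$ by the very definition of how $SB_i$ is built from $SB_{i-1}$, and the denominators are $G(N_{i-1} + D_{i-1})$. A short verification shows that $G$ is linear on ordered sets of the same length, i.e.\ $G(A+B) = G(A) + G(B)$, from which $G(N_{i-1}+D_{i-1}) = G(N_{i-1}) + G(D_{i-1}) = N_i + D_i$, closing the induction.

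The step I expect to be the main obstacle is the self-similarity claim, since it is the only place one has to reason about how successive sequences nest inside one another rather than simply manipulate the $N$ and $D$ notation. Everything after that is bookkeeping: once the nesting and the linearity of $G$ are in hand, the two identities drop out together from a single line of algebra.
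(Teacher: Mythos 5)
Your proposal is correct and follows essentially the same route as the paper's own proof: induction on the row index, the nesting identity $SB_{i+1}[0,2^i]=F(SB_i[0,2^{i-1}])$, Corollary 2.2 to pass from mediant insertion on fractions to $G$ acting on numerators and denominators, and the additivity of $G$ for the denominator identity. The only difference is cosmetic: you justify the self-similarity step by counting and locality, whereas the paper asserts it directly.
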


\begin{proof}
We prove by induction on $i$.

The result is easily checked for $i = 0$. Now suppose the result holds for all $i \le k$.

First we need $$N_{k + 2}[0, 2^{k + 1}] = N_{k + 1},$$

We have 
$$SB_{k + 2}[0, 2^{k + 1}] = F(SB_{k + 1})[0, 2^{k + 1}] = F(SB_{k + 1}[0, 2^{k}]).$$

We can therefore equate the set of numerators of the left and right hand side, so

$$N_{k + 2}[0, 2^{k + 1}] = G(N_{k + 1}[0, 2^{k}])$$

since fractions never reduce. But by the induction hypothesis, 
$$N_{k + 1}[0, 2^k] = N_k$$ meaning 
$$G(N_{k + 1}[0, 2^{k}]) = G(N_k) = N_{k + 1}.$$
where the last step follows again from the fact that fractions never reduce.

Additionally, we need $$D_{k + 2}[0, 2^{k + 1}] = N_{k + 1} + D_{k + 1}$$
Again, $$SB_{k + 2}[0, 2^{k + 1}] = F(SB_{k + 1})[0, 2^{k}] = G(SB_{k + 1}[0, 2^{k}]).$$ Equating denominators we have
$$D_{k + 2}[0, 2^{k + 1}] = G(D_{k + 1}[0, 2^k])$$ since fractions never reduce. By the induction hypothesis, $$G(D_{k + 1}[0, 2^k]) = G(D_k + N_k),$$ and since $G$ is additive, $$ G(D_k + N_k) = G(D_k) + G(N_k).$$ Once more, fractions never reduce, so $$ G(D_k) + G(N_k) = D_{k + 1} + N_{k + 1},$$ as desired.

\end{proof}

We can apply these lemmas together to prove a fascinating result; while the theorem is well-known, the proof is original to the best of our knowledge.

\begin{theorem}
All rational numbers between $0$ and $1$ appear in some (and, of course, every subsequent) Stern-Brocot half-sequence.
\end{theorem}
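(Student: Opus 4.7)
My plan is to argue by contradiction, exploiting Lemma 2.1 (cross-determinant $=1$) to squeeze the denominators of any ``skipped'' rational. Suppose $p/q \in (0,1)$ is in lowest terms and never appears in any half-sequence. Since $\frac{0}{1}$ and $\frac{1}{1}$ are the endpoints of $SB_0$ and every $SB_i$ is a refinement of the previous sequence, $p/q$ must, at every stage $i$, lie strictly between two consecutive entries $a_i/b_i$ and $c_i/d_i$ of $SB_i$. My goal is to bound $b_i+d_i$ in terms of $q$, and then observe that $b_i+d_i$ must grow without bound.

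The key algebraic step is to convert the strict inequalities $a_i/b_i < p/q < c_i/d_i$ into the integer inequalities $pb_i-qa_i \ge 1$ and $qc_i - pd_i \ge 1$ (both differences are positive integers). Multiplying the first by $d_i$ and the second by $b_i$ and adding gives
\[
q(b_ic_i - a_id_i) \;\ge\; b_i + d_i.
\]
By Lemma 2.1 the cross-determinant equals $1$, so this collapses to $b_i + d_i \le q$, valid for every $i$.

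Next I would show this bound is incompatible with the growth of denominators. At each step the bracketing pair for $p/q$ is replaced by $(a_i/b_i,\,(a_i+c_i)/(b_i+d_i))$ or $((a_i+c_i)/(b_i+d_i),\,c_i/d_i)$ depending on which side of the mediant $p/q$ lies; in both cases the new denominator sum is at least $b_i + d_i + \min(b_i,d_i) > b_i + d_i$. (One can also see this from the interval length $\tfrac{1}{b_id_i}\to 0$, which forces both denominators to infinity.) Hence $b_i+d_i$ is strictly increasing in $i$, eventually exceeding $q$, contradicting the bound above. Therefore $p/q$ must equal some entry of some $SB_i$.

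The main obstacle is simply noticing the right linear combination of the two strict inequalities so that the cross-determinant appears on the left; everything else is bookkeeping. A minor subtlety is verifying that the mediant is always a strict refinement (so the bracketing pair really does change at each step), but this is immediate from Corollary 2.1 since the inserted mediant is already reduced and therefore distinct from $p/q$ by assumption.
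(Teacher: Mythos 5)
Your argument is correct, but it is a genuinely different proof from the one in the paper. The paper proceeds by induction on the denominator $q$, using the recursion $N_{i+1}[0,2^i]=N_i$, $D_{i+1}[0,2^i]=N_i+D_i$ of Lemma 2.5 to realize $\frac{a}{q}$ (for $\frac{a}{q}<\frac12$) as the image of $\frac{a}{q-a}$, which has smaller denominator, and then invokes the algebraic symmetry of Lemma 2.4 to handle $\frac{a}{q}>\frac12$. You instead run the classical squeeze: if $p/q$ is forever strictly bracketed by consecutive entries $a_i/b_i<p/q<c_i/d_i$, then $pb_i-qa_i\ge 1$ and $qc_i-pd_i\ge 1$, and the combination $d_i(pb_i-qa_i)+b_i(qc_i-pd_i)=q(b_ic_i-a_id_i)=q$ (by Lemma 2.1) forces $b_i+d_i\le q$, while the bracketing denominators grow by at least $\min(b_i,d_i)\ge 1$ per row. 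Both arguments are sound; the details you would still need to write out (positivity of the two integer differences, strict growth of $b_i+d_i$) all check. Your route is more self-contained --- it needs only the cross-determinant lemma rather than Lemmas 2.4 and 2.5 --- and it yields a quantitative bound: $p/q$ must appear within roughly $q$ rows. It also transfers more directly to the setting of Section 3 (Theorem 3.1), where the left--right symmetry used in the paper's proof is unavailable. What the paper's approach buys is that it exercises the structural recursion on numerators and denominators, which is developed there for its own sake, and it locates the new fraction explicitly as the image of a known one under the map $\frac{a}{b}\mapsto\frac{a}{a+b}$.
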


\begin{proof}
Let us induct on the denominator $d$. When $d = 1$, we need $\frac{0}{1}$ and $\frac{1}{1}$ to appear in some half-sequence, which they clearly do. Suppose the result holds for all $d \le k$. Then when $d = k + 1$, we need all fractions of the form $\frac{a}{k + 1}$ where $1 \le a \le k + 1$ and $\gcd{(a, k + 1)} = 1$ to appear in some half-sequence. If $\frac{a}{k + 1} < \frac{1}{2}$, our previous lemma guarantees that this fraction exists if $\frac{a}{(k + 1) - a}$ appears in the previous half-sequence. But the denominator of this fraction is strictly less than $k + 1$, so we are done by the inductive hypothesis. Otherwise, if $\frac{a}{k + 1} > \frac{1}{2}$, the algebraic symmetry of Stern-Brocot sequences means this fraction appears if and only if $\frac{(k + 1) - a}{k + 1} < \frac{1}{2}$ and so by the same argument as above, we are done.
\end{proof}

Of course, the reciprocal property of the complete Stern-Brocot sequence means it contains every non-negative rational number.

\section{Arbitrary Starting Terms}

One variant of the Stern-Brocot tree that arises quite naturally comes from varying the two starting terms; that is, beginning instead with any pair of non-negative rational numbers. The process of inserting mediants is exactly the same: the mediant fraction $\frac{a + c}{b + d}$ is reduced and inserted between the consecutive fractions $\frac{a}{b}$ and $\frac{c}{d}$. Since the cross-determinant is no longer necessarily $1$, the reduction step is significant.

Example.

	\[ \frac{2}{5} \; \; \; \frac{5}{11}\]  \[\frac{2}{5} \; \; \; \frac{7}{16} \; \; \; \frac{5}{11}\] 
	\[\frac{2}{5} \; \; \; {\bf \frac{3}{7}} \; \; \;  \frac{7}{16} \; \; \;  {\bf \frac{4}{9}} \; \; \; \frac{5}{11}\] 
	\[ \frac{2}{5} \; \; \; \frac{5}{12} \; \; \; \frac{3}{7} \; \; \; \frac{10}{23} \; \; \;  \frac{7}{16} \; \; \; \frac{11}{25} \; \; \; \frac{4}{9} \; \; \; \frac{9}{20} \; \; \; \frac{5}{11}\]

	Notice that the fractions in bold have been reduced.

We shall investigate how these generalized sequences behave and whether they exhibit properties similar to the original Stern-Brocot sequences. Of particular interest to us is whether each rational number between the two starting terms appears somewhere in the sequence, a strong claim which the original sequence satisfies. We are also interested in the cross-determinant, as it is central to the behavior of the sequence because of its role in determining where and when the terms of the sequence must be reduced. We now present three results, characterized by the value of the cross-determinant.

Let $S_n(a, b) = F^{(n)}(a, b)$ be the $n^{th}$ sequence formed by repeatedly inserting mediants between consecutive fractions. Also denote by $T(a, b)$ the tree formed by all the $S_i(a, b)$. For example, $T(2, 3)$ is $$\frac{2}{1} \; \; \; \frac{3}{1}$$ $$\frac{2}{1} \; \; \; \frac{5}{2} \; \; \; \frac{3}{1}$$  $$\frac{2}{1} \; \; \; \frac{7}{3} \; \; \; \frac{5}{2} \; \; \; \frac{8}{3} \; \; \; \frac{3}{1}$$ $$\dots$$

\begin{theorem}
If fractions $\frac{a}{b}$ and $\frac{c}{d}$ satisfy $bc - ad = 1$, every rational number in the interval $[\frac{a}{b}, \frac{c}{d}]$ appears in $T(a, b)$.
\end{theorem}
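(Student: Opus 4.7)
The idea is to show that $T(a/b, c/d)$, under the hypothesis $bc - ad = 1$, is just a relabeling of the standard Stern--Brocot half-sequence tree, after which Theorem~2.5 will finish the job. The relabeling I have in mind is the integer-coefficient map
$$\phi(x/y) = \frac{(c-a)x + ay}{(d-b)x + by},$$
which sends $0/1 \mapsto a/b$ and $1/1 \mapsto c/d$ and whose $2 \times 2$ coefficient matrix has determinant $(c-a)b - a(d-b) = bc - ad = 1$. My goal is to show that $\phi$ carries the $n$-th half-sequence $SB_n$ bijectively onto the $n$-th row of $T(a/b, c/d)$.

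First I would re-run the inductive argument of Lemma~2.1 on $T(a/b, c/d)$: because the starting pair has cross-determinant $1$ and the insertion step preserves cross-determinants, every pair of consecutive fractions in the tree has cross-determinant $1$, and (as in the corollary to Lemma~2.1) no mediant ever needs to be reduced. Next I would check that $\phi$ commutes with mediants --- which is immediate from the linearity of its numerator and denominator in $(x, y)$ --- and that $\phi$ preserves reducedness. The latter follows from the two identities
$$b\bigl((c-a)x + ay\bigr) - a\bigl((d-b)x + by\bigr) = (bc-ad)x = x,$$
$$(c-a)\bigl((d-b)x + by\bigr) - (d-b)\bigl((c-a)x + ay\bigr) = (bc-ad)y = y,$$
which force any common divisor of the numerator and denominator of $\phi(x/y)$ to divide $\gcd(x,y)$. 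A straightforward induction on the row index then establishes that $\phi$ sends $SB_n$ bijectively to the $n$-th row of $T(a/b, c/d)$.

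Given the bijection, the theorem follows quickly: for any reduced $p/q \in [a/b, c/d]$, the pullback $\phi^{-1}(p/q)$ is a reduced rational in $[0,1]$ (the inverse map has integer entries because $\phi$'s matrix has determinant $1$), so by Theorem~2.5 it appears in some $SB_n$; applying $\phi$ then places $p/q$ in the $n$-th row of $T(a/b, c/d)$.

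The main obstacle is essentially the bookkeeping around reductions, and the hypothesis $bc-ad = 1$ is used in an essential way twice: once to guarantee that reductions never occur in $T(a/b, c/d)$, and once to ensure that $\phi$ is an integer bijection preserving reducedness. I expect that in the subsequent theorems, where the cross-determinant exceeds $1$, both of these will fail and a genuinely different strategy will be needed.
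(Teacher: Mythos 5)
Your proof is correct and is essentially the paper's own argument in different clothing: since a fraction with weights $(w,z)$ in the standard half-tree is $z/(w+z)$, your map $\phi$ is exactly the paper's substitution $(w,z)\mapsto\frac{aw+cz}{bw+dz}$ rewritten in terms of numerator and denominator, and both proofs rest on the same two pillars (no reductions when the cross-determinant is $1$, plus Theorem~2.5 for the standard tree). You simply make the row-by-row correspondence explicit by induction where the paper instead solves directly for the weights $w=cy-dx$, $z=bx-ay$.
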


\begin{proof}
We know the result holds when $\frac{a}{b} = \frac{0}{1}$ and $\frac{c}{d} = \frac{1}{1}$. Every number in this range can be written as $\frac{z}{w + z}$ for some choice of $w, z$. But this is just $$\frac{0w + 1z}{1w + 1z}$$ which means we can obtain any combination of $2$ weights that describe how many left and right mediants have been taken.

If $bc - ad = 1$ for some $a, b, c, d$, reduction of fractions never takes place, meaning we only need to show that any number $\frac{x}{y}$ with $\frac{a}{b} \le \frac{x}{y} \le \frac{c}{d}$ can be written as $$\frac{aw + cz}{bw + dz}$$ for appropriate choice of $w, z$, since under the transformation $0 \rightarrow a, 1 \rightarrow b, 1 \rightarrow c, 1 \rightarrow d$, we can reduce the problem to the appearance of a particular fraction in $T(\frac{0}{1}, \frac{1}{1})$ which we know happens.

We want $$\frac{aw + cz}{bw + dz} = \frac{x}{y},$$ or equivalently,

$$(ay)w + (cy)z = (bx)w + (dx)z,$$ $$(bx - ay)w = (cy - dx)z$$ $$(bx - ay)(w + z) = [(bx - ay) + (cy - dx)]z,$$ $$\frac{z}{z + w} = \frac{(bx - ay)}{(bx - ay) + (cy - dx)},$$ meaning we can take $w = cy - dx$ and $z = bx - ay$ which are both positive integers. Then the fraction $\frac{x}{y}$ appears in $T(\frac{a}{b}, \frac{c}{d})$.

\end{proof} 

For two special types of cross-determinant, Theorem 3.1 alone is sufficient to prove that every rational number in between the two starting terms is contained in the tree.

\begin{theorem}
If the cross-determinant of $\frac{a}{b}$ and $\frac{c}{d}$ is a power of two, every rational number in the interval $[\frac{a}{b}, \frac{c}{d}]$ appears in $T(\frac{a}{b}, \frac{c}{d})$. Furthermore, for all $i \ge L$ for some $L$ sufficiently large each pair of consecutive elements in $S(\frac{a}{b}, \frac{c}{d}, i)$ has cross-determinant $1$. 
\end{theorem}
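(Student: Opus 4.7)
The plan is to prove the ``furthermore'' clause first and then deduce the main claim as an immediate consequence of Theorem 3.1.

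The key input is a descent rule for cross-determinants. If $\frac{a'}{b'}, \frac{c'}{d'}$ are a consecutive pair with $b'c' - a'd' = 2^m$ and $g = \gcd(a'+c', b'+d')$, then $g$ divides $(a'+c')b' - (b'+d')a' = b'c' - a'd' = 2^m$, so $g$ is itself a power of $2$, and after the reduced mediant is inserted the two new consecutive cross-determinants are each equal to $2^m/g$. The crucial refinement is that whenever $m \geq 1$ we must have $g \geq 2$. I would prove this by a short parity argument: if $b'c' - a'd'$ is even and each of $\frac{a'}{b'}, \frac{c'}{d'}$ is reduced, then a case check on the admissible parity patterns shows $a' \equiv c'$ and $b' \equiv d' \pmod 2$, so $a'+c'$ and $b'+d'$ are simultaneously even, forcing $2 \mid g$.

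Combining the two facts yields the ``furthermore'' clause by induction on the row index. Let $M_i$ denote the largest cross-determinant appearing between consecutive elements of $S_i(\frac{a}{b}, \frac{c}{d})$; then $M_0 = 2^k$, and the descent rule together with the parity refinement forces $M_{i+1} \leq M_i/2$ whenever $M_i \geq 2$. Hence $M_i = 1$ for every $i \geq k$, and we may take $L = k$. For the main claim, fix any rational $\frac{x}{y} \in [\frac{a}{b}, \frac{c}{d}]$; in row $L$ this interval is subdivided by consecutive pairs each of cross-determinant $1$, so choosing the subinterval $[\frac{a_j}{b_j}, \frac{a_{j+1}}{b_{j+1}}]$ that contains $\frac{x}{y}$ and applying Theorem 3.1 to that pair shows that $\frac{x}{y}$ appears in $T(\frac{a_j}{b_j}, \frac{a_{j+1}}{b_{j+1}})$, which is naturally a subtree of $T(\frac{a}{b}, \frac{c}{d})$.

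The main obstacle I anticipate is the parity refinement: the descent rule alone only gives $M_{i+1} \leq M_i$, so one really needs the sharpening $g \geq 2$ to force the induction to terminate. The case analysis itself is mechanical, but one must verify that each admissible residue pattern of $(a', b', c', d')$, restricted by reducedness and by $b'c' - a'd'$ being even, simultaneously makes $a'+c'$ and $b'+d'$ even. Once that is in place, the rest is a single induction on the $2$-adic valuation of the initial cross-determinant, followed by an appeal to Theorem 3.1.
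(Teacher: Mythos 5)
Your proposal is correct, and its essential ingredients are exactly those of the paper's proof: the fact that the reduction factor $g$ of an inserted mediant divides the cross-determinant of the parent pair (hence is a power of two, and the two new cross-determinants are $2^m/g$), and the parity case analysis showing that a reduced pair with even cross-determinant must satisfy $a'\equiv c'$ and $b'\equiv d' \pmod 2$, forcing $g\ge 2$. Where you differ is the bookkeeping. The paper inducts on the exponent of the cross-determinant: the parity lemma shows the first inserted mediant reduces, splitting $[\frac{a}{b},\frac{c}{d}]$ into two subintervals whose cross-determinants are strictly smaller powers of two, and the inductive hypothesis is invoked on each subinterval to get the appearance claim and the eventual-determinant-one claim simultaneously. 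You instead run the descent row by row on the maximum cross-determinant $M_i$, obtaining the sharper, explicit statement $M_i=1$ for all $i\ge k$ where $bc-ad=2^k$ (so one may take $L=k$), and you then derive the appearance claim as a formal corollary of the furthermore clause by applying Theorem 3.1 to the consecutive pair of row $L$ whose interval contains the given rational. Both routes tacitly use the same locality fact, namely that the portion of $T(\frac{a}{b},\frac{c}{d})$ generated between two consecutive entries of some row is exactly the tree on that pair, so neither gains or loses there; what your organization buys is an explicit value of $L$ and a clean logical separation of the two conclusions, while the paper's exponent induction has the advantage of being the same template it reuses for the $2^m3^n$ case in Theorem 3.3.
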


\begin{proof}
We prove by induction. When $bc - ad = 2^0 = 1$, this is simply Theorem 3.1 . Suppose the result holds for $2^i$ where $i \le k$, so we want to show the result for $2^{k + 1}$. Consider the parity of $a, b, c, d$. Since $\frac{a}{b}$ and $\frac{c}{d}$ are in lowest terms, $a, b$ cannot both be even and $c, d$ cannot both be even. Also, $bc$ and $ad$ are either both odd or both even (since their difference is a power of $2$). If they are both odd, $a, b, c, d$ are all odd. If they are both even, either $a, c$ are even and $b, d$ are odd or $a, c$ are odd and $b, d$ are even. In either case, the numerator and denominator of the resulting mediant fraction are both even, meaning it must be reduced. Since $bc - ad = 2^{k + 1},$ the factor it is reduced by must itself be a power of two, so we have in the subsequent sequence $$\frac{a}{b} \; \; \; \frac{m}{n} \; \; \; \frac{c}{d},$$ where $bm - an = cn - dm = 2^j$ for $j \le k$. Then by our inductive hypothesis, all rational numbers in the intervals $[\frac{a}{b}, \frac{m}{n}]$ and $[\frac{m}{n}, \frac{c}{d}]$ must appear, meaning every number in their union, $[\frac{a}{b}, \frac{c}{d}]$, must appear, as desired. 

Also by the inductive hypothesis, the set of cross-determinants in each of the two smaller intervals $[\frac{a}{b}, \frac{m}{n}]$ and $[\frac{m}{n}, \frac{c}{d}]$ will consist of only $1$s after finitely many steps, so after finitely many steps their union, the set of cross-determinants of the pair $\frac{a}{b}, \frac{c}{d}$, will consist of only $1$'s and so we are done.
\end{proof}

\begin{theorem}
If the cross-determinant of $\frac{a}{b}$ and $\frac{c}{d}$ is of the form $2^m3^n$, every rational number in the interval $[\frac{a}{b}, \frac{c}{d}]$ appears in the sequence. Furthermore, after finitely many rows, each pair of consecutive elements in all subsequent rows has cross-determinant $1$.
\end{theorem}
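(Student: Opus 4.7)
The plan is to prove Theorem 3.3 by strong induction on $N = 2^m 3^n$, mirroring the structure of Theorem 3.2. The base case $N = 1$ is Theorem 3.1, and the inductive step assumes the statement for every proper divisor of $N$, which automatically has the same form $2^{m'} 3^{n'}$.

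The heart of the argument is the following key lemma: \emph{if $bc - ad$ is divisible by $p \in \{2, 3\}$, then after at most a bounded number of rows (one row for $p = 2$, two rows for $p = 3$) every pair of consecutive elements in the subtree rooted at $(a/b, c/d)$ has cross-determinant strictly less than $N$.} For $p = 2$ this is exactly the parity argument of Theorem 3.2: $2 \mid bc - ad$ together with the coprimality of each fraction forces $a + c$ and $b + d$ to both be even, so the first mediant already reduces by $2$. For $p = 3$ the first mediant need not reduce, and one has to look one row deeper. In the generic subcase where $3 \nmid a$ and $3 \nmid b$, the condition $3 \mid bc - ad$ gives $c/a \equiv d/b \pmod 3$; coprimality of the fractions rules out the common residue being $0$, so it lies in $\{1, 2\}$. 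When the residue is $2$, both $a + c$ and $b + d$ are divisible by $3$ and the first mediant reduces; when it is $1$, a direct computation on each child pair shows that the governing residue advances (by $+1$ on the left child and by an inversion-like step on the right), so that both children land at residue $2$ and their mediants reduce one row later. The degenerate subcases where $3 \mid a$ or $3 \mid b$ are handled by an analogous case check and again yield a reduction within two rows.

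Once the lemma is in hand, the induction closes cleanly. After the bounded initial rows, every descendant pair has cross-determinant a proper divisor of $N$, still of the form $2^{m'} 3^{n'}$, so the inductive hypothesis applies to each such pair's subtree. Within finitely many further rows every consecutive pair has cross-determinant $1$, giving the ``furthermore'' statement; Theorem 3.1 applied to each such pair then produces every rational in the corresponding sub-interval, and the union of these sub-intervals covers $[a/b, c/d]$.

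The main obstacle is the $p = 3$ portion of the key lemma. Unlike the quick parity check for $p = 2$, the mod-$3$ case requires splitting on the residues of $a, b, c, d$ and verifying in every subcase that the residue governing reducibility advances into the reducing class after at most one extra row. This bookkeeping is routine but not automatic, and it is the one place where the argument genuinely departs from the template of Theorem 3.2.
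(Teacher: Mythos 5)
Your proposal is correct and follows essentially the same route as the paper: the heart in both cases is the mod-$3$ residue casework showing that when $3 \mid bc-ad$ either the first mediant or both second-row mediants $\frac{2a+c}{2b+d}, \frac{a+2c}{b+2d}$ reduce, so every consecutive cross-determinant drops by a factor of $3$ within two rows, combined with the parity argument for the factor $2$ and a descent to Theorem 3.1 once all determinants reach $1$. Your packaging as a single strong induction on $N = 2^m 3^n$ is a slightly tidier organization than the paper's sequential appeal to Theorem 3.2 followed by iteration on the power of $3$, but the key lemma and verifications are the same.
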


\begin{proof}
By Theorem 3.2, suffice it to consider the union of finitely many intervals $$[x_1, x_2] \cup [x_2, x_3] \cup \dots \cup [x_{t - 1}, x_t]$$ where the cross-determinant of $x_i$ and $x_{i + 1}$ is a power of $3$. We claim that if $bc - ad = 3^j$, the cross-determinants of consecutive terms taken from the set $$\frac{a}{b} \; \; \frac{2a + c}{2b + d} \; \; \frac{a + c}{b + d} \; \; \frac{a + 2c}{b + 2d} \; \; \frac{c}{d}$$ are at most $3^{j - 1}$. To prove this we do casework on the different possible values of $a, b, c, d \pmod{3}$ using the fact that $3 | bc - ad$ means $bc \equiv ad \pmod{3}$.

If $bc \equiv ad \equiv 0 \pmod{3}$, then $3|b$ or $3|c$ and $3|a$ or $3|d$. Since $(a, b) = (c, d) = 1$, this means either $3|b$ and $3|d$ or $3|a$ and $3|c$. Without loss of generality suppose $3|a$ and $3|b$. Then since $3 \nmid b$ and $3 \nmid d$ either $b \equiv -d \pmod{3}$, in which case $\frac{a + b}{c + d}$ will be reduced, or $b \equiv -2d \pmod{3}$, meaning the two fractions $\frac{2a + c}{2b + d}$ and $\frac{a + 2c}{b + 2d}$ will be reduced. In either case, the maximum possible cross-determinant of any pair of consecutive fractions in the next sequence is $3^{j - 1}$.

If $bc \equiv ad \equiv 1 \pmod{3}$, then $(b, c), (a, d) \pmod{3} \in \{(1, 1), (2, 2)\}$. If $(b, c)$ and $(a, d)$ are the same (that is, both either $(1, 1)$ or $(2, 2)$), then $3 | 2a + c$, $3 | 2b + d$, $3 | a + 2c$, $3| b + 2d$ so again both fractions $\frac{2a + c}{2b + d}$ and $\frac{a + 2c}{b + 2d}$ are reducible. If instead one of $(a, b), (c, d)$ is $(1, 1)$ and the other is $(2, 2)$, then $3 | a + c$ and $3 | b + d$, so the fraction $\frac{a + c}{b + d}$ is reducible and again the maximum possible cross-determinant of any pair of consecutive fractions is $3^{j - 1}$.

Finally, if $bc \equiv ad \equiv 2 \pmod{3}$, $(b, c), (a, d) \pmod{3} \in \{(2, 1), (1, 2)\}$. If they are the same, $3 | 2a + c$, $3 | 2b + d$, $3 | a + 2c$, and $3 | b + 2d$ so both fractions $\frac{2a + c}{2b + d}$ and $\frac{a + 2c}{b + 2d}$ are reducible. Otherwise, $3 | a + c$ and $3 | b + d$ so $\frac{a + c}{b + d}$ is reducible and so all cases are covered.

In any case, after at most $2j$ iterations all pairs of consecutive terms have cross-determinant $1$, meaning we can apply Theorem 3.1 to finish.

\end{proof}

We would now like to generalize this result to all possible values of the cross-determinant; that is, to show that regardless of the value of $bc - ad$ the tree $T(\frac{a}{b}, \frac{c}{d})$ contains all rational numbers in $[\frac{a}{b}, \frac{c}{d}]$. To do this, we will first introduce the notion of \emph{corresponding elements} and \emph{equivalent trees}. 

Let $e_1$ be an element of some tree $T_1$ such that it occupies position $p$ in row $r$ of $T_1$. For any other tree $T_2$, we will call $e_1$ and $e_2 \in T_2$ \emph{corresponding elements} if and only if $e_2$ occupies position $p$ in row $r$ of $T_2$.

Then given two trees, we say they are \emph{equivalent} if and only if all pairs of corresponding elements are reduced by exactly the same factor. Equivalent trees are very closely related in structure. In fact, 

\begin{theorem}
Let $T_1 = T(\frac{a_1}{b_1}, \frac{c_1}{d_1})$ and $T_2 = T(\frac{a_2}{b_2}, \frac{c_2}{d_2})$ be two equivalent trees. If $e_1 = \frac{p_1}{q_1} \in T_1$ and $e_2 = \frac{p_2}{q_2} \in T_2$ are corresponding elements, then $e_1$ and $e_2$ are the same weighted combination of the initial terms in their respective trees. That is, let $(x, y, g)$ be the unique triple of positive integers satisfying $\gcd({x, y})$ = 1, $\frac{p_1}{q_1} = \frac{a_1x + c_1y}{b_1x + d_1y}$, and $g = \gcd({a_1x + c_1y, b_1x + d_1y})$. Then $\frac{p_2}{q_2} = \frac{a_2x + c_2y}{b_2x + d_2y}$ and $g = \gcd({a_2x + c_2y, b_2x + d_2y})$.
\end{theorem}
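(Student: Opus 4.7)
The plan is to induct on the row index $r$ in which the corresponding pair $e_1 \in T_1$, $e_2 \in T_2$ first appears. Elements copied unchanged from earlier rows inherit their triple $(x, y, g)$ by the inductive hypothesis, so the real work lies at newly inserted mediants. At $r = 0$ the two starting fractions trivially match in both trees with triples $(1, 0, 1)$ and $(0, 1, 1)$; the strict positivity of $x, y$ demanded by the theorem first kicks in at the very first mediant, where both weights become $1$.

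For the inductive step, suppose every corresponding pair through row $r$ shares a common triple. Let $L, R$ be consecutive in row $r$ with shared triples $(x_L, y_L, g_L)$ and $(x_R, y_R, g_R)$. In $T_i$ the reduced neighbors are $\frac{(a_i x_L + c_i y_L)/g_L}{(b_i x_L + d_i y_L)/g_L}$ and the analogous expression for $R$, so their mediant, as a rational number, equals
\[
\frac{a_i(g_R x_L + g_L x_R) + c_i(g_R y_L + g_L y_R)}{b_i(g_R x_L + g_L x_R) + d_i(g_R y_L + g_L y_R)} = \frac{a_i X + c_i Y}{b_i X + d_i Y},
\]
where $X = g_R x_L + g_L x_R$ and $Y = g_R y_L + g_L y_R$. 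The critical observation is that $X$ and $Y$ depend only on the inductive data and are therefore identical in $T_1$ and $T_2$. Setting $h = \gcd(X, Y)$ and $(x, y) = (X/h, Y/h)$ yields coprime weights that are again shared by both trees and that satisfy $\frac{p_i}{q_i} = \frac{a_i x + c_i y}{b_i x + d_i y}$ as a fraction.

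The final step is to pin down $g$. By the equivalence hypothesis, the reduction factor $k$ applied at this mediant position is identical in $T_1$ and $T_2$. Computing the $\gcd$ of the unreduced mediant's numerator and denominator two different ways --- once directly via $k$ and once via the factored expression $\frac{h(a_i x + c_i y)}{h(b_i x + d_i y)}$ --- yields the identity $g \cdot h = g_L g_R \cdot k$, so that $g = g_L g_R k / h$ is common to both trees. Combined with equality of $(x, y)$, this closes the induction.

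The main obstacle I expect is the careful bookkeeping of these reduction factors: one must verify that dividing $(X, Y)$ by $h$ really produces the unique coprime weights featured in the theorem, and that the $g$ emerging from the chain of equalities genuinely coincides with $\gcd(a_i x + c_i y, b_i x + d_i y)$ in each tree rather than some spurious divisor. Uniqueness of the triple $(x, y, g)$ itself follows from the invertibility of the linear map $(x, y) \mapsto (a_i x + c_i y, b_i x + d_i y)$, whose determinant $\pm(b_i c_i - a_i d_i)$ is nonzero because the two starting fractions of each tree are distinct; the subscript-$i$-free nature of $(x, y)$ and $g$ then drops out of the induction above.
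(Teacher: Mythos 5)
Your proposal is correct and follows essentially the same route as the paper's proof: induction on the row number, writing consecutive elements as weighted combinations $\frac{a_ix+c_iy}{b_ix+d_iy}$ of the starting terms, computing the mediant's weights, and invoking the equal-reduction-factor hypothesis of equivalence to force the gcds (and hence the triples) to agree. Your extra bookkeeping with $h=\gcd(X,Y)$ is in fact a refinement of the paper's argument, which tacitly treats the combined weights $(g_Rx_L+g_Lx_R,\,g_Ry_L+g_Ly_R)$ as if they were already coprime; otherwise the two proofs coincide.
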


We saw a special case of this concept briefly in the proof of Theorem $3.1$. We present a formal proof of the general statement now.

\begin{proof}
We will prove by induction on the row number $r$. When $r = 0$, the conclusion is obvious. Then suffice it to show, given consecutive fractions $m_1, n_1$ in a row of $T_1$ and the corresponding $m_2, n_2$ in $T_2$ which satisfy the statement of the theorem, that their mediant fractions do as well. We can write $$m_1 = \frac{\frac{w_1a_1 + z_1c_1}{g_1}}{\frac{x_1b_1 + y_1d_1}{g_1}}$$ and $$n_1 = \frac{\frac{w_2a_1 + z_2c_1}{g_2}}{\frac{x_2b_1 + y_1d_1}{g_2}}$$ for the appropriate $w_1, z_1, w_2, z_2, g_1, g_2$ so that each fraction is now written in lowest terms. Taking the mediant, we arrive at $$s_1 = \frac{\frac{(g_2w_1 + g_1w_2)a_1 + (g_2z_1 + g_1z_2)c_1}{g_1g_2}}{\frac{(g_2w_1 + g_1w_2)b_1 + (g_2z_1 + g_1z_2)d_1}{g_1g_2}}$$

The numerator and denominator of $s_1$ are not necessarily coprime; to account we let $g$ be the $\gcd$ of the numerator and denominator whence we can write the numerator and denominator of $s_1$ exactly as $$\frac{(g_2w_1 + g_1w_2)a_1 + (g_2z_1 + g_1z_2)c_1}{gg_1g_2}$$ and $$\frac{(g_2w_1 + g_1w_2)a_1 + (g_2z_1 + g_1z_2)c_1}{gg_1g_2}$$ respectively. 

For our other tree, analogous algebra gives as the mediant of $m_2$ and $n_2$ $$s_2 = \frac{\frac{(g_2w_1 + g_1w_2)a_2 + (g_2z_1 + g_1z_2)c_2}{g_1g_2}}{\frac{(g_2w_1 + g_1w_2)b_2 + (g_2z_1 + g_1z_2)d_2}{g_1g_2}}$$ Once more, we must divide to account for the fact that the numerator and denominator of $s_2$ are not necessarily coprime. However, $T_1$ and $T_2$ are equivalent, so the factor by which they are reduced is the same --- $g$. Then 
$$\frac{(g_2w_1 + g_1w_2)a_2 + (g_2z_1 + g_1z_2)c_2}{gg_1g_2}$$ and $$\frac{(g_2w_1 + g_1w_2)b_2 + (g_2z_1 + g_1z_2)d_2}{gg_1g_2}$$ are the numerator and denominator of $s_2$.

Notice now that $s_1$ and $s_2$ have equal weights $(g_2w_1+ g_1w_2, g_2z_1 + g_1z_2)$ and that $$\gcd((g_2w_1 + g_1w_2)a_1 + (g_2z_1 + g_1z_2)c_1, (g_2w_1 + g_1w_2)b_1 + (g_2z_1 + g_1z_2)d_1) =$$ $$\gcd((g_2w_1 + g_1w_2)a_2 + (g_2z_1 + g_1z_2)c_2, (g_2w_1 + g_1w_2)b_2 + (g_2z_1 + g_1z_2)d_2) = gg_1g_2$$ so by induction the result holds.
\end{proof}

This almost immediately gives the following lemma:

\begin{lemma}
If $T_1 = T(\frac{a_1}{b_1}, \frac{c_1}{d_1})$ and $T_2 = T(\frac{a_2}{b_2}, \frac{c_2}{d_2})$ are equivalent trees and $T_2$ contains all rational numbers in the interval $[\frac{a_2}{b_2}, \frac{c_2}{d_2}]$, then $T_1$ contains all rational numbers in the interval $[\frac{a_1}{b_1}, \frac{c_1}{d_1}]$.
\end{lemma}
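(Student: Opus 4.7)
The plan is to exploit Theorem 3.4 to transport the appearance of every rational in $T_2$'s interval back to $T_1$. Given any rational $\frac{P}{Q}$ in lowest terms lying in $[\frac{a_1}{b_1}, \frac{c_1}{d_1}]$, I first find the unique coprime nonnegative integers $(x, y)$ such that $\frac{P}{Q} = \frac{a_1 x + c_1 y}{b_1 x + d_1 y}$ as fractions. Cross-multiplying yields the linear relation $(b_1 P - a_1 Q)\, x = (c_1 Q - d_1 P)\, y$, whose two coefficients are nonnegative because $\frac{P}{Q}$ lies in the interval. This fixes the ratio $x : y$, and imposing $\gcd(x,y)=1$ pins the pair down uniquely. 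The endpoints $\frac{a_1}{b_1}, \frac{c_1}{d_1}$ (weights $(1,0)$ and $(0,1)$) are trivially in $T_1$, so we may assume $x, y > 0$.

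Next I cross over to $T_2$: form the fraction $\frac{P'}{Q'} := \frac{a_2 x + c_2 y}{b_2 x + d_2 y}$ in lowest terms. Since $x, y > 0$, a short computation shows this weighted combination lies strictly between $\frac{a_2}{b_2}$ and $\frac{c_2}{d_2}$. By hypothesis, $\frac{P'}{Q'}$ appears somewhere in $T_2$, say at row $r$, position $p$. Running the uniqueness argument above but with the starting terms of $T_2$ in place of $T_1$, the coprime weights associated with position $(r, p)$ in $T_2$ via Theorem 3.4 must coincide with the $(x, y)$ we started with, since the value $\frac{P'}{Q'}$ and the initial pair $(\frac{a_2}{b_2}, \frac{c_2}{d_2})$ determine those weights uniquely.

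Now I invoke Theorem 3.4 directly. Since $T_1$ and $T_2$ are equivalent, the element at position $(r, p)$ in $T_1$ shares the same coprime weight pair $(x, y)$ as its counterpart in $T_2$, and moreover the reduction factor $g$ is the same on both sides. The theorem's conclusion therefore forces the entry at $(r, p)$ in $T_1$ to equal $\frac{a_1 x + c_1 y}{b_1 x + d_1 y}$ in lowest terms, which by construction is $\frac{P}{Q}$. Thus $\frac{P}{Q}$ appears in $T_1$, as required.

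The main obstacle is really just the uniqueness of coprime weights, which I expect to follow from the nonsingularity of $\begin{pmatrix} a_1 & c_1 \\ b_1 & d_1 \end{pmatrix}$ (guaranteed because $\frac{a_1}{b_1} \neq \frac{c_1}{d_1}$), together with checking positivity. Once that is in hand, the lemma reduces to a translation of ``$\frac{P}{Q} \in T_1$'' into a statement about weights, a detour through $T_2$ via the hypothesis, and a return trip through Theorem 3.4.
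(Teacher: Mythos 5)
Your proposal is correct and follows essentially the same route as the paper: translate ``$\frac{P}{Q}$ appears'' into attainability of its (unique, coprime) weight pair, use the hypothesis to locate the corresponding weighted combination in $T_2$, and transfer it back to $T_1$ via the weight-preservation guaranteed by Theorem 3.4. The paper states this more tersely as ``equivalent trees attain exactly the same set of weights,'' while you spell out the uniqueness and positivity checks explicitly, but the underlying argument is identical.
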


\begin{proof}
$T_2$ contains all rational numbers in the interval $[\frac{a_2}{b_2}, \frac{c_2}{d_2}]$ if and only if all possible weights $(x, y)$ are attainable. This is because every rational number in this interval can be written as a weighted combination of $\frac{a}{b}, \frac{c}{d}$, which means if some pair of weights $(x, y)$ is not attainable, the corresponding fraction does not appear. But since $T_1$ and $T_2$ are equivalent, the set of weights attainable in $T_1$ is exactly the set of weights attainable in $T_2$. Since all possible weights are attainable in $T_2$, they are all attainable in $T_1$, so $T_1$ contains all rational numbers in $[\frac{a_1}{b_1}, \frac{c_1}{d_1}]$.
\end{proof}

Now that we can indirectly show that a tree $T(\frac{a}{b}, \frac{c}{d})$ contains all rational numbers in the interval $[\frac{a}{b}, \frac{c}{d}]$, we are motivated to establish equivalence between the general tree $T(\frac{a}{b}, \frac{c}{d})$ and some particularly malleable one.

\begin{theorem}
For any tree $T(\frac{a}{b}, \frac{c}{d})$, there exists a positive integer $v$ such that $T(\frac{a}{b}, \frac{c}{d})$ is equivalent to the tree $T(\frac{0}{1}, \frac{D}{v})$, where $D = bc - ad$ is the cross determinant of the pair $\frac{a}{b}$, $\frac{c}{d}$.
\end{theorem}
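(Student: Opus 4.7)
My plan is to construct $v$ by the Chinese Remainder Theorem so that the identity
\[\gcd(ax + cy,\, bx + dy) = \gcd(Dy,\, x + vy) \quad (\star)\]
holds for every coprime pair $(x, y)$, and then mirror the inductive argument of Theorem 3.4 to deduce that the two trees have matching reduction factors at every position.

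First I would reduce tree equivalence to $(\star)$. When consecutive reduced fractions in $T_1$ carry weight data $(x_1, y_1, g_1)$ and $(x_2, y_2, g_2)$ as in Theorem 3.4, their mediant has combined unreduced weights $(X, Y) = (g_2 x_1 + g_1 x_2,\; g_2 y_1 + g_1 y_2)$, and the factor needed to put it in lowest terms equals $\gcd(aX + cY,\, bX + dY)/(g_1 g_2)$ in $T_1$ and $\gcd(DY,\, X + vY)/(g_1 g_2)$ in $T_2$. Both gcds scale linearly with $\gcd(X, Y)$, so matching them reduces to $(\star)$ at coprime weights. An induction on row number then yields equivalence: given matching reduction factors through row $r$, the argument of Theorem 3.4 gives identical weight representations at row $r$, and $(\star)$ supplies identical reduction factors for the new mediants inserted in row $r + 1$.

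Next I would construct $v$ one prime at a time. Fix a prime $p$ with $p^e \parallel D$. Since $\gcd(a, b) = 1$, at least one of $a, b$ is coprime to $p$; suppose without loss of generality that $p \nmid a$. Using the identity $b(ax + cy) - a(bx + dy) = Dy$, a short valuation analysis should show that for coprime $(x, y)$ with $p \nmid y$,
\[v_p\bigl(\gcd(ax + cy,\, bx + dy)\bigr) = \min\bigl(v_p(ax + cy),\, e\bigr),\]
while the right side of $(\star)$ has $p$-adic valuation $\min\bigl(v_p(x + vy),\, e\bigr)$. These agree for all such $(x, y)$ precisely when $v \equiv a^{-1} c \pmod{p^e}$. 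The case $p \mid y$ is immediate from $\gcd(x, y) = 1$, since both sides then have $p$-valuation zero. The prescription is well-defined when both $p \nmid a$ and $p \nmid b$, because $bc \equiv ad \pmod{p^e}$ gives $a^{-1}c \equiv b^{-1}d \pmod{p^e}$. Assembling across primes by CRT produces a residue class for $v$ modulo $|D|$, from which I take any positive representative.

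Finally I would check $\gcd(v, D) = 1$ so that $D/v$ is a legitimate lowest-terms starting fraction: if $p \mid v$ and $v \equiv a^{-1}c \pmod{p^e}$, then $p \mid c$, and combined with $p \mid D$ and $p \nmid a$ this forces $p \mid d$, contradicting $\gcd(c, d) = 1$. The main obstacle I anticipate is the parallel induction, since Theorem 3.4 takes equivalence of the full trees as hypothesis yet we are trying to establish it; the resolution is that the proof of Theorem 3.4 is genuinely row-by-row, so equivalence through row $r$ is enough to deduce matching weight representations at row $r$, which is all the inductive step requires.
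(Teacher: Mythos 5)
Your proposal follows essentially the same route as the paper: reduce tree equivalence to the gcd identity $\gcd(ax+cy,\,bx+dy)=\gcd(Dy,\,x+vy)$, choose $v\equiv a^{-1}c$ (equivalently $b^{-1}d$) modulo $p^{e}$ for each $p^{e}\parallel D$, assemble by CRT, and rerun the row-by-row induction of Theorem 3.4. The one caution is that ``WLOG $p\nmid a$'' is not literally harmless --- when $p\mid a$ (which forces $p\mid c$) you must take $v\equiv b^{-1}d\pmod{p^{e}}$, exactly the paper's second case --- but since the identity being proved is symmetric under swapping numerators with denominators this is presentational rather than a real gap, and your local computation via $b(ax+cy)-a(bx+dy)=Dy$ together with $\gcd(ax+cy,\,bx+dy)\mid D$ is in fact a slightly cleaner version of the paper's casework on which of $a,b,c,d$ the prime divides.
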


\begin{proof}
Suppose there existed a positive integer $V$ such that $\gcd(ax + cy, bx + dy) = \gcd(Dy, x + Vy)$ for all $x, y$. We claim it would follow that $T_1 = T(\frac{a}{b}, \frac{c}{d})$ and $T_2 = T(\frac{0}{1}, \frac{D}{V})$ are equivalent.

The first row of $T_1$ and $T_2$ are of course equivalent, so initially, corresponding entries have the same weights. When reduction of fractions with the same weights takes place, we can now be sure it is by exactly the same factor so corresponding reduced fractions also have the same weights. These two assumptions are exactly the inductive hypothesis of the proof of Theorem 3.4, so by an identical argument $T_1$ and $T_2$ are equivalent.

It remains only to show that some such $V$ exists. Let the prime factorization of $D$ be $p_1^{e_1}p_2^{e_2} \dots p_k^{e_k}$. We know that if two fractions in a tree with determinant $D$ reduce by some factor $g$, then $g | D$.  Then if we can show that there exists some $v$ such that for all $p_i$, $\min(v_{p_i}(ax + cy), v_{p_i}(bx + dy)) = \min(v_{p_i}(Dy), v_{p_i}(x + Vy)) = v_{p_i}(x + vy)$, we will be done (here, $v_p(x)$ denotes the p-adic valuation of $x$).

Suffice it to show that $v_{p_i}(\gcd(ax + cy, bx + dy)) = v_{p_i}(x + Vy)$ for all $p_i$. Using the fact that $p_i | D = bc - ad$ and $\gcd(a, b) = \gcd(c, d) = 1$, $p_i$ divides either zero, one, or two of $a, b, c, d$. However, $p_i$ cannot divide just one of $a, b, c, d$ lest it divide exactly one of $ad, bc$ contradicting the fact that it divides $D$, their difference.  We consider two cases:

{\bf Case 1}: $p_i \nmid a, b, c, d$.

Since $a$ and $b$ are invertible $\mod{p_i^{e_i}}$, and $v_{p_i}(ax + cy), v_{p_i}(bx + dy) \le v_{p_i}(D)$, $$v_{p_i}(bx + dy) = v_{p_i}(a(bx + dy)) = v_{p_i}(abx + ady + Dy) = v_{p_i}(abx + cby) = v_{p_i}(ax + cy)$$

Let $a^{-1}$ denote the inverse of $a$, $\mod{p_i^{e_i}}$. Since of course $p_i \nmid a^{-1}$, $v_{p_i}(ax + cy) = v_{p_i}(a^{-1}(ax + cy)) = v_{p_i}(x + a^{-1}cy)$.  Then if we take $V \equiv a^{-1}c \pmod{p_i^{e_i}}$, we are guaranteed that $v_{p_i}(\gcd(ax + cy, bx + dy)) = v_{p_i}(x + Vy)$.

{\bf Case 2}: Exactly two of $a, b, c, d$ are divisible by $p_i$.

Notice that $p_i$ cannot divide $a$ and $b$ simultaneously, lest $\frac{a}{b}$ would not be reduced. Similarly, $p_i$ cannot divide $c$ and $d$ simultaneously. It is also not possible that $p_i | a, d$ or $p_i | b, c$ since otherwise exactly one of $bc, ad$ would be divisible by $p_i$ contradicting the fact that their difference $D$ is divisible by $p_i$. Then either $p_i | a, c$ or $p_i | b, d$.

Without loss of generality suppose $p_i | a, c$. Then $b$ and $d$ are invertible $\pmod{p_i^{e_i}}$ so we can write $$v_{p_i}(bx + dy) = v_{p_i}(b^{-1}(bx + dy)) = v_{p_i}(x + b^{-1}d).$$ If we can show that $v_{p_i}(ax + cy) \ge v_{p_i}(bx + dy)$, we can simply take $V \equiv b^{-1}d \pmod{p_i^{e_i}}$ to finish.

Now consider $v_{p_i}(a)$ and $v_{p_i}(c)$. If they are unequal, the lesser of the two is $e_i$, since otherwise $v_{p_i}(D) > e_i$ or $v_{p_i}(D) < e_i$, contradiction. Then $v_{p_i}(ax + cy) \ge e_i$ so $v_{p_i}(ax + cy) \ge v_{p_i}(bx + dy)$ as desired.

Otherwise, $v_{p_i}(a) = v_{p_i}(c) = u_i \le e_i$. If $u_i = e_i$, we can make the same argument as above and $v_{p_i}(ax + cy) \ge v_{p_i}(bx + dy)$. If not, $a = p_i^{u_i}a'$ and $c = p_i^{u_i}c'$ with $p_1 \nmid a', c'$. Additionally, $v_{p_i}(a'd - bc') = e_i - u_i$. 

Suppose $p_i^{w_i} || bx + dy$. To show that $v_{p_i}(bx + dy) \ge v_{p_i}(ax + cy)$, it is enough to prove that $p^{w_i - u_i} | a'x + c'y$. Notice that $$(a'd - bc')x = d(a'x + c'y) - c'(bx + dy),$$ where $p_i^{e_i - u_i} | a'd - bc'$ and $p_i^{w_i} | bx + dy$. Since $p_i \nmid d$, taking the equation $\mod{p_i^{\min(e_i - u_i, w_i)}}$ gives $v_{p_i}(bx + dy) \ge \min(e_i - u_i, w_i)$. Of course $w_i > w_i - u_i$ but also $e_i \ge w_i$ so $e_i - u_i > w_i - u_i$. Therefore, $v_{p_i}(bx + dy) \ge v_{p_i}(ax + cy)$, so we can choose $V$ as we did above and we are done.

\end{proof}

We can take advantage of the linearity of the equivalent tree $T(\frac{0}{1}, \frac{D}{V})$ to prove the following:

\begin{theorem}
If $\frac{a}{b}, \frac{c}{d}$ are any two rational numbers, $T(\frac{a}{b}, \frac{c}{d})$ contains all rational numbers in the interval $[\frac{a}{b}, \frac{c}{d}].$
\end{theorem}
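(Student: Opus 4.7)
The plan is to reduce the claim to a particularly tractable tree via the previous results, and then prove it there by induction on the cross-determinant. First, I would apply Theorem 3.6 to obtain a positive integer $v$ for which $T(\frac{a}{b}, \frac{c}{d})$ is equivalent to $T(\frac{0}{1}, \frac{D}{v})$, where $D = bc - ad$. By Lemma 3.5 it then suffices to show that $T(\frac{0}{1}, \frac{D}{v})$ contains every rational in $[0, D/v]$.

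Next, I would exploit the linearity of $T(\frac{0}{1}, \frac{D}{v})$: at any position with coprime weights $(x, y)$ the fraction before reduction is $\frac{Dy}{x + vy}$, and since $\gcd(x, y) = 1$ any common divisor of $Dy$ and $x + vy$ must also divide $D$. Hence the reduction factor at each position is simply $\gcd(D, x + vy)$, which depends only on $x + vy$ modulo the divisors of $D$. A short computation shows that for any target rational $\frac{p}{q} \in [0, D/v]$ in lowest terms, setting $k = \gcd(D, p)$, the unique coprime weight yielding $\frac{p}{q}$ is $(x, y) = \bigl((qD - vp)/k,\, p/k\bigr)$, and the reduction factor there is exactly $D/k$. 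So everything reduces to showing that every coprime non-negative integer pair (not both zero) is attained as a weight somewhere in $T(\frac{0}{1}, \frac{D}{v})$.

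I would prove attainability by induction on $D$. The base case $D = 1$ is Theorem 3.1. For the inductive step, the main obstacle is showing that the cross-determinants of consecutive elements in $T(\frac{0}{1}, \frac{D}{v})$ must eventually drop strictly below $D$ in every sub-interval. I would argue that for each prime $p \mid D$, following the pre-reduction denominators $x + vy$ modulo $p$ along any downward path in the tree behaves like a Stern-Brocot sequence on $\mathbb{Z}/p\mathbb{Z}$ starting from $1$ and $v$; since the $\mathbb{Z}$-span of $\{1, v\}$ is all of $\mathbb{Z}/p\mathbb{Z}$, the residue $0$ must appear after finitely many rows, forcing a reduction by $p$ and a strict drop of the cross-determinant. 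This generalizes the mod-$p$ casework carried out in Theorems 3.2 and 3.3. Once every sub-interval has cross-determinant strictly smaller than $D$, the inductive hypothesis fills in all rationals in each sub-interval, their union is $[0, D/v]$, and transferring through Lemma 3.5 gives the theorem for $T(\frac{a}{b}, \frac{c}{d})$.
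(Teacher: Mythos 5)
Your setup is the same as the paper's: reduce to $T(\frac{0}{1}, \frac{D}{v})$ via Theorem 3.6 and Lemma 3.5, and induct on $D$ with Theorem 3.1 as the base case. The computation of the weight $(x,y)$ and reduction factor for a target $\frac{p}{q}$ is also fine. The gap is in the inductive step. You assert that in every sub-interval the cross-determinant must drop strictly below $D$ after finitely many rows, arguing that the pre-reduction denominators $x + vy$ sweep out all residues mod each prime $p \mid D$ because $\{1, v\}$ spans $\mathbb{Z}/p\mathbb{Z}$. This argument shows at most that \emph{some} position in a sub-tree would force a reduction --- and even that presumes the weights combine purely additively, which fails once any reduction has occurred (by Theorem 3.4 the mediant of fractions reduced by $g_1, g_2$ has weights $(g_2w_1 + g_1w_2,\, g_2z_1 + g_1z_2)$, not $(w_1 + w_2,\, z_1 + z_2)$). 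It does not show that the bracketing pair $(l_i, r_i)$ around a given target $x$ ever has its determinant drop; a reduction happening somewhere off the path toward $x$ does you no good, and the ``after finitely many rows all determinants are $1$'' phenomenon was only established in Theorems 3.2 and 3.3 by explicit mod-$2$ and mod-$3$ casework, not in general.

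The paper closes exactly this hole with a dichotomy along the path rather than a claim about the whole tree. Fix the target $x$ and track the bracketing pairs $(l_i, r_i)$ in $T(\frac{0}{1}, \frac{D}{V})$ alongside the bracketing pairs $(L_i, R_i)$ of $\frac{x}{D}$ in the determinant-$1$ tree $T(\frac{0}{1}, \frac{1}{V})$, where $\frac{x}{D}$ first appears as a mediant in row $k$. Either (a) the mediant of some $l_i, r_i$ is reduced, in which case the determinant of the new bracketing pair is strictly less than $D$ and the inductive hypothesis applied to $T(l_{i+1}, r_{i+1}) \subseteq T(\frac{0}{1}, \frac{D}{V})$ yields $x$; or (b) no reduction ever occurs along the path, in which case linearity gives $l_i = D\,L_i$ and $r_i = D\,R_i$ for all $i$, so the unreduced mediant at step $k$ is $D \cdot \frac{x}{D} = x$ and the target appears anyway. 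Case (b) is precisely the scenario your argument tries to rule out but cannot; the paper does not rule it out, it observes that in that scenario the theorem holds for free. You would need to add this (or an equivalent path-by-path analysis) to make your induction go through.
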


\begin{proof}
We prove by strong induction on the value of the cross-determinant $D = bc - ad$.
When $D = 1$, the result is just Theorem 3.1. Now suppose the result holds for all values $D \le n$ for some $n$. To show that it holds for $D = n + 1$, we will show that for all positive integers $V$, the tree $T(\frac{0}{1}, \frac{n + 1}{V})$ contains all rational numbers in the interval $[\frac{0}{1}, \frac{n + 1}{V}]$. By Lemma 3.5 and Theorem 3.6, this is sufficient.

Consider any rational number $x \in [\frac{0}{1}, \frac{n + 1}{V}]$. We know the rational number $\frac{x}{n + 1}$ appears in $T(\frac{0}{1}, \frac{1}{V})$ since this tree has cross-determinant $1$. Suppose this fraction appears for the first time in row $k$. For $0 \le i < k$, define $L_i$ to be the greatest fraction less than $\frac{x}{n + 1}$ in row $i$ of $T(\frac{0}{1}, \frac{1}{V})$. Similarly, let $R_i$ be the least fraction greater than $\frac{x}{n + 1}$ in row $i$ of $T(\frac{0}{1}, \frac{1}{V})$.

Suppose for the sake of contradiction that $x$ does not appear in $T(\frac{0}{1}, \frac{D}{V})$. Let us analogously define $l_i$ to be the greatest fraction less than $x$ in row $i$ of $T(\frac{0}{1}, \frac{D}{V})$ and $r_i$ to be the least fraction greater than $x$. 

First, $l_0 = \frac{0}{1}$ and $r_0 = \frac{n + 1}{V}$ while $L_0 = \frac{0}{1}$ and $R_0 = \frac{1}{V}$. We also know that $l_{i + 1}$ and $r_{i + 1}$ are either $l_i$ and the mediant of $l_i, r_i$ or the mediant of $l_i, r_i$ and $r_i$. If this mediant is ever reduced, the determinant of $l_{i + 1}$ and $r_{i + 1}$ is reduced by the same factor meaning it is strictly less than $D$, the determinant of $l_i$ and $r_i$. But since $x \in [l_{i + 1}, r_{i + 1}]$ the inductive hypothesis means $x \in T(l_i, r_i)$ which is itself contained in $T(\frac{0}{1}, \frac{n + 1}{V})$. 
 
Then suppose the mediant of $l_i, r_i$ never needs to be reduced for any $i$. By the linearity of addition, a simple inductive argument gives $l_i = (n + 1)L_i$ and $r_i = (n + 1)R_i$. Since the mediant of $L_{k - 1}$ and $R_{k - 1}$ is $\frac{x}{n + 1}$ by hypothesis, and because the mediant of $l_{k - 1}$ and $r_{k - 1}$ does not have to be reduced, the mediant fraction formed from $l_{k - 1}$ and $r_{k - 1}$ must be $(n + 1)(\frac{x}{n + 1}) = x$, contradicting the fact that $x$ does not appear in $T(\frac{0}{1}, \frac{n + 1}{V})$.

Then by induction, every rational number in $[\frac{a}{b}, \frac{c}{d}]$ appears in $T(\frac{a}{b}, \frac{c}{d})$ regardless of the choice of $a, b, c, d$.

\end{proof}

\section{Acknowledgements}

We thank the PRIMES program at MIT for allowing us the opportunity to do this project. We also thank Dr. Tanya Khovanova (MIT) for helping guide the research, teaching the author to program in Mathematica, and greatly improving the quality of writing in the paper. Finally, we would like to thank Prof. James Propp (UMass) for suggesting the project and discussing it with the author.

\end{document}